
\documentclass{amsart}
\hyphenpenalty=0
\vbadness=10000

\font\ehsc=cmcsc10 scaled 850

\let\<=\langle
\let\>=\rangle
\let\\=\backslash
\let\what=\widehat
\let\wtil=\widetilde
\let\sse=\subseteq
\let\noi=\noindent
\let\vphi=\varphi
\let\veps=\varepsilon
\let\limply=\Longrightarrow

\def\0{\{0\}}
\def\span{{\kern.5pt{\rm span}\kern1pt}}
\def\smallfrac#1#2{{\textstyle{\frac{#1}{#2}}}}
\def\conv{{\;\longrightarrow\;}}
\def\wconv{{{\buildrel_{\scriptstyle w}\over\conv}}}
\def\kconv{{\conv_{_{\kern-13.5pt\scriptstyle k\kern9pt}}}}
\def\jwconv{{\wconv_{_{\kern-13.5pt\scriptstyle j\kern9pt}}}}

\def\emap{\hbox to25pt{\rightarrowfill}}

\def\nmap{\Big\uparrow}
\def\dsse{{\kern7pt{\overline{\phantom{\overline:}}{\kern-7pt\sse}}\kern3pt}}

\def\A{{\mathcal A}}
\def\B{{\mathcal B}}
\def\C{{\kern.5pt\mathcal C}}

\def\Oe{{\mathcal O}}
\def\R{{\mathcal R}}
\def\X{{\mathcal X}}
\def\Z{{\mathcal Z}}

\def\BX{{\B[\X]}}

\def\CC{{\mathbb C\kern.5pt}}
\def\DD{{\mathbb D\kern.5pt}}
\def\RR{{\mathbb R\kern.5pt}}
\def\TT{{\mathbb T\kern.5pt}}

\newsymbol\varnothing 203F
\let\void=\varnothing

\def\matrix#1{\null\,\vcenter{
              \normalbaselines\mathsurround=0pt\ialign{
              \hfil $##$
              \hfil && \quad
              \hfil $##$
              \hfil \crcr
              \mathstrut \crcr
              \noalign{\kern-\baselineskip}#1 \crcr
              \mathstrut \crcr
              \noalign{\kern-\baselineskip} \crcr }}\,}

\newtheorem{theorem}{Theorem}

\newtheorem{corollary}{Corollary}

\theoremstyle{definition}

\newtheorem{remark}{Remark}

\numberwithin{theorem}{section}
\numberwithin{lemma}{section}
\numberwithin{corollary}{section}
\numberwithin{proposition}{section}
\numberwithin{conjecture}{section}
\numberwithin{definition}{section}
\numberwithin{remark}{section}
\numberwithin{question}{section}

\begin{document}

\vglue-65pt\noindent
\hfill{\it Advances in Mathematical Sciences and Applications}\/
{\bf 30} (2021) 571-585

\vglue20pt
\title{Weakly Supercyclic Power Bounded Operators of Class {\bf C}$_1$.}
\author{C.S. Kubrusly}
\address{Mathematics Institute, Federal University of Rio de Janeiro, Brazil}
\email{carloskubrusly@gmail.com}
\author{B.P. Duggal}
\address{Faculty of Sciences and Mathematics, University of Ni\v s, Serbia}
\email{bpduggal@yahoo.co.uk}
\subjclass{Primary 47A16; Secondary 47A15}
\renewcommand{\keywordsname}{Keywords}
\keywords{Supercyclic operators, weak supercyclicity, weak stability.}
\date{August 15, 2021}

\begin{abstract}
There is no supercyclic power bounded operator of class
$C_{1{\textstyle\cdot}}.$ There exist, however, weakly l-sequentially
supercyclic unitary operators$.$ We show that if $T$ is a weakly
l-sequentially supercyclic power bounded operator of class
$C_{1{\textstyle\cdot}}$, then it has an extension $\what T$ which is
a weakly l-sequentially supercyclic singular-continuous unitary (and
$\what T$ has a Rajchman scalar spectral measure whenever $T$ is weakly
stable)$.$ The above result implies
$\sigma_{\kern-1ptP}(T)=\sigma_{\kern-1ptP}(T^*)=\void$, and also that
if a weakly l-sequentially supercyclic operator is similar to an isometry,
then it is similar to a unitary operator.
\end{abstract}

\maketitle

\vskip-15pt\noi
\section{Introduction}

The motivation for the present paper is synthesized in the following chain of
results and questions$.$ (Some of these hold in a normed space as will
become clear from the text but all of them certainly hold for Hilbert-space
operators --- notation and terminology will be described in the next section.)
\vskip2pt\noi
\begin{description}
\item{$\kern-7pt$\rm(A)$\kern2pt$}
{\it There is no supercyclic isometry}\/ $\;\;$(on a complex Banach space)
\vskip2pt\noi
\end{description}
\cite[proof of Theorem 2.1]{AB} (also \cite[Lemma 4.1]{KD1}).
\vskip2pt\noi
\begin{description}
\item{$\kern-7pt$\rm(B)$\kern2pt$}
{\it A power bounded operator of class\/ $C_{1{\textstyle\cdot}\!}$ is not
supercyclic}
\vskip2pt\noi
\end{description}
\cite[Theorem 2.1]{AB} (recall$:$ isometries are
$C_{1{\textstyle\cdot}}$-contractions).
\vskip2pt\noi
\begin{description}
\item{$\kern-7pt$\rm(C)$\kern2pt$}
{\it A supercyclic power bounded operator is strongly stable}\/.
\end{description}
\vskip2pt\noi
This is a (nontrivial) consequence of (B) proved in \cite[Theorem 2.2]{AB}
(see extension in \cite[Section 7]{Ker3})$.$ Such a result suggests the
following weak counterpart$.$ Question$:$
\vskip2pt\noi
\begin{description}
\item{$\kern-7pt$\rm(D)$\kern2pt$}
{\it is a weakly l-sequentially supercyclic power bounded operator weakly
stable}$\kern1pt?$
\end{description}
\vskip2pt\noi
The question was raised in \cite{KD1} and remains unanswered$.$ We will show
here that
\vskip2pt\noi
\begin{description}
\item{$\kern-7pt$\rm(E)$\kern2pt$}
{\it if\/ $T$ is a weakly l-sequentially supercyclic power bounded of class\/
$C_{1{\textstyle\cdot}\!}$, then}
$$
\sigma_{\kern-1ptP}(T)=\sigma_{\kern-1ptP}(T^*)=\void.
$$
\vskip-6pt\noi
\end{description}
\vskip2pt\noi
Another related open question:
\vskip2pt\noi
\begin{description}
\item{$\kern-7pt$\rm(F)$\kern2pt$}
{\it is a weakly l-sequentially supercyclic power bounded operator of class\/
$C_{1{\textstyle\cdot}\!}$ similar to a unitary operator}$\kern1pt?$
\end{description}
\vskip2pt\noi
In fact (see \cite[Example 3.6]{BM1}, \cite[pp.10,12]{BM1},
\cite[Proposition 1.1, Theorem 1.2]{Shk}),
\vskip2pt\noi
\begin{description}
\item{$\kern-7pt$\rm(G)$\kern2pt$}
{\it there exist weakly l-sequentially supercyclic unitary operators}\/.
\end{description}
\vskip2pt\noi
Moreover \cite[Theorem 4.2]{Kub1},
\vskip2pt\noi
\begin{description}
\item{$\kern-7pt$\rm(H)$\kern2pt$}
{\it every weakly l-sequentially supercyclic unitary operator is
singular-continuous}\/,
\end{description}
\vskip2pt\noi
and (see, e.g., \cite[Propositions 3.2 and 3.3]{Kub1}),
\vskip2pt\noi
\begin{description}
\item{$\kern-6pt$\rm(I)$\kern3pt$}
{\it there are weakly stable and also weakly unstable singular-continuous
unitary operators}\/.
\end{description}

\vskip6pt 
Such a chain of statements and questions helps to assemble a string of
arguments to approach the main results of the present paper, which are
described in the next paragraph$.$ Question (F) is linked to a well-known
result$:$ {\it A weakly l-sequentially supercyclic hyponormal operator}\/
(in particular, {\it a weakly l-sequentially supercyclic isometry\/$)$ is a
multiple of a unitary operator}\/ (and so {\it it is similar to a unitary
opera\-tor}\/) \cite[Theorem 3.4]{BM1}, which has been extended beyond
hyponormal operators in \cite[Corollary 3.1]{Dug}, \cite[Theorem 2.7]{DKK}$.$
This complements the result which says$\kern.5pt:$ {\it no hypo\-normal
operator}\/ (in particular, {\it no isometry\/$)$ is supercyclic}\/
\cite[Theorem 3.1]{Bou} (for the particular case,
\cite[proof of Theorem 2.1]{AB}, \cite[Lemma 4.1]{KD1})$.$ Question (F) is
linked to a classical result as well$:$ {\it a Hilbert-space operator is an
invertible power bounded with a power bounded inverse if and only if it is
similar to a unitary operator}\/ \cite{Nag}.

\vskip6pt
Power bounded operators of class $C_{1{\textstyle\cdot}}$ together with some
notion of cyclicity have played a significant role in operator theory (for
instance, in connection with the invariant subspace problem --- see, e.g.,
\cite{Ker0,Kub0})$.$ The original contribution to linear dynamics in this paper
is the characterization of weak l-sequential supercyclicity for power bounded
operators of class $C_{1{\textstyle\cdot}\!}$ as a nontrivial extension of (B)
from \cite[Theorem 2.1]{AB}$.$ The main results along this line are presented
in Theorems 3.1 and 3.2, leading to the relevant point spectra identity stated
in item (E) above, which is proved in Corollary 4.1$.$ Corollary 4.2 shows
that {\it a weakly l-sequentially supercyclic operator similar to an isometry
is indeed similar to a unitary operator}\/.

\section{Notation and Terminology}

Throughout this paper $\X$ stands for a complex normed space (the special
cases of inner product, Banach, and Hilbert spaces are discussed accordingly),
and $\BX$ stands for the normed algebra of all operators on $\X\kern-1pt$
(i.e., of all boundedu linear transformations of $\X$ into itself)$.$ The
linear manifold ${\R(T)=T(\X)}$ of $\X$ is the range of $T$ in $\BX.$ Let
${\X^*\!=\B[\X,\CC]}$ be the dual of $\X$ and let $T^*\kern-1pt$ in
$\B[\X^*]$ be the normed-space adjoint of $T$ (same notation for the
Hilbert-space adjoint of operators on a Hilbert space where the concepts of
dual and adjoint are shaped by the Riesz Representation Theorem)$.$ An
operator ${T\in\BX}$ is power bounded if ${\sup_n\kern-1pt\|T^n\|<\infty}$
(i.e., if ${\sup_n\kern-1pt\|T^nx\|\kern-1pt<\kern-1pt\infty}$ for every
${x\kern-1pt\in\kern-1pt\X}$ by the Banach--Steinhaus Theorem if $\X$ is
Banach)$.$ An operator $T$ is strongly or weakly stable if the $\X$-valued
power sequence $\{T^nx\}$ converges strongly or weakly to zero for every
${x\in\X}$ (i.e., if ${T^nx\to0}$ which means ${\|T^nx\|\to0}$, or
${T^nx\wconv0}$ which means ${f(T^nx)\to0}$ for every
${f\kern-1.5pt\in\kern-1pt\X^*}\!$, for every ${x\kern-1pt\in\kern-1pt\X}$,
respectively)$.$ It is uniformly stable if the $\BX$-valued sequence $\{T^n\}$
converges (in the uniform operator topology) to the null operator$.$ Thus
uniform stability implies strong stability, which implies weak stability,
which in turn implies power bounded\-ness$.$ An operator $T$ is of class
$C_{0{\,\kern-1pt\textstyle\cdot}}$ if it is strongly stable, and of class
$C_{{\textstyle\cdot}0}$ if its adjoint $T^*\kern-1pt$ is strongly stable$.$
It is of class $C_{1{\textstyle\cdot}}$ if ${T^nx\not\to0}$ for every nonzero
${x\kern-1pt\in\kern-1pt\X}$, and of class $C_{{\textstyle\cdot}1}$ if\/
${T^{*n}f\not\to0}$ for every nonzero ${f\kern-1pt\in\kern-1pt\X^*\!}$ (or
${T^{*n}x\not\to0}$ for every nonzero ${x\kern-1pt\in\kern-1pt\X}$ if $\X$ is
Hilbert)$.$ All combinations are possible leading to classes $C_{00}$,
$C_{01}$, $C_{10}$, $C_{11}$.

\vskip6pt
Let the orbit of a vector ${y\in\X}$ under an operator ${T\in\BX}$ be the set
$$
\Oe_T(y)=\big\{T^ny\in\X\!:\,n\ge0\big\}.
$$
The orbit of the one-dimensional space spanned by $y$,
$$
\Oe_T(\span\{y\})=\big\{\alpha T^ny\in\X\!:\,\alpha\in\CC,\,n\ge0\big\},
$$
is referred to as the projective orbit of a vector ${y\in\X}$ under an
operator ${T\in\BX}.$ An operator $T$ is {\it hypercyclic}\/ if the orbit of
sone vector ${y\in\X}$ is dense in $\X$, that is ${\Oe_T(y)^-\!=\X}$ where
the upper bar $^-$ stands for closure in the norm topology$.$ It is
{\it cyclic}\/ if ${\span\Oe_T(y)^-\!=\X}$$:$ the linear span of the orbit of
some $y$ is dense in $\X.$ A nonzero vector ${y\in\X}$ is a {\it supercyclic
vector}\/ for an operator ${T\in\BX}$ if the projective orbit of $y$ is dense
in $\X$ (in the norm topology), that is if
$$
\Oe_T(\span\{y\})^-\!=\X.
$$
Thus a nonzero ${y\in\X}$ is a supercyclic vector for $T$ if and only if
for every ${x\in\X}$ there exists a sequence of nonzero complex numbers
$\{\alpha_i\}_{i\ge0}$ (which depends on $x$ and $y$) such that for some
subsequence $\{T^{n_i}\}_{i\ge0}$ of $\{T^n\}_{n\ge0}$
$$
\alpha_iT^{n_i}y\to x
\qquad
\hbox{(i.e., $\|\alpha_iT^{n_i}y-x\|\to0$}).
$$
If ${T\in\BX}$ has a supercyclic vector, then it is a {\it supercyclic
operator}$.$ The weak counterpart of the above convergence criterion reads
as follows$.$ A nonzero vector ${y\in\X}$ is a {\it weakly l-sequentially
supercyclic vector}\/ for an operator ${T\in\BX}$ if for every ${x\in\X}$
there exists a sequence of nonzero complex numbers $\{\alpha_i\}_{i\ge0}$
such that
$$
\alpha_iT^{n_i}y\wconv x
\qquad
\hbox{(i.e., $f(\alpha_iT^{n_i}y-x)\to0$ for every $f\in\X^*$)}
$$
for some subsequence $\{T^{n_i}\}_{i\ge0}$ of $\{T^n\}_{n\ge0}.$ An operator
$T$ in $\BX$ is a {\it weakly l-sequentially supercyclic operator}\/ if it
has a weakly l-se\-quentially supercyclic vector$.$ It is {\it weakly
supercyclic}\/ if there is a vector ${y\in\X}$ (called a {\it weak supercyclic
vector}\/) for which the projective orbit $\Oe_T(\span\{y\})$ is dense in $\X$
in the weak topology$.$ These are related as follows (and the converses fail
--- \cite[pp.38,39]{Shk}, \cite[pp.259,260]{BM2}):
\vskip6pt\noi
\centerline{\ehsc supercyclic $\,\limply\,$ weakly l-sequentially supercyclic
$\,\limply\,$ weakly supercyclic.}
\vskip6pt

Any form of cyclicity implies the operator $T$ acts on a separable space, and
so separability for $\X$ is a consequence of any form of cyclicity$.$ Several
forms of weak supercyclicity, including weak l-sequential supercyclicity, have
recently been examined in
\cite{San1, San2, BM1, MS, Shk, DKK, Dug, Kub1, KD1, KD2}$.$ The notion of
weak l-sequential supercyclicity was introduced explictly in \cite{BCS} and
implicitly in \cite{BM1}, and investigated in \cite{Shk} where a terminology
similar to the one adopted here was introduced (we use the letter ``l'' for
``limit'' instead of the numeral ``1'' used in \cite{Shk} --- there are
reasons for both terminologies).

\vskip3pt\noi
\begin{remark}
If a vector $y$ in a normed space $\X$ is supercyclic, or weakly
l-se\-quentially supercyclic, or weakly supercyclic for $T$ on $\X$, then so
is any vector in $\Oe_T(\span\{y\})$ \cite[Lemma 5.1]{Kub2}, and so is in
particular the vector $Ty.$ Moreover,
$$
\Oe_T(\span\{Ty\})
=\big\{\alpha T^ny\in\X\!:\,\alpha\in\CC,\,n\ge1\big\}\sse\R(T)\sse\X.
$$
Thus if $y$ is weakly supercyclic for $T$ (in particular, if it is
l-sequentially supercyclic, or simply supercyclic) then its range $\R(T)$ is
weakly dense in $\X$, and so it is dense in the norm topology since $\R(T)$ is
convex in $\X$ (see, e.g., \cite[Theorem 2.5.16]{Meg})$.$ (Note: The above
argument gives still another the proof for \cite[Lemma 4.1]{KD2}.)
\end{remark}

\section{Weak l-Sequential Supercyclicity and Power Boundedness}

Banach limit is a standard tool in functional analysis, whose existence was
established by Banach himself \cite[p.21]{Ban} as a consequence of the
Hahn--Banach Theorem$.$ Let $\ell_+^\infty$ denote the Banach space of all
complex-valued bounded sequences equipped with its usual sup-norm$.$ A Banach
limit is a bounded linear functional ${\vphi\!:\ell_+^\infty\!\to\CC}$
assigning a complex number to each sequence ${\{\xi_n\}\in\ell_+^\infty}.$ For
existence and properties of Banach limits see, e.g., \cite[Section III.7]{Con}
or \cite[Problem 4.66]{EOT})$.$ The Banach limit technique used here has
originated in the celebrated Nagy's 1947 paper \cite{Nag}, and has been
applied quite often since then (see, e.g., \cite{Ker1, Ker2, Ker3} for
applications to power bounded operators which is the focus here)$.$ For
a recent survey see \cite{KD3}.

\vskip3pt\noi
\begin{theorem}
Let\/ $T$ be a power bounded operator of class\/ $C_{1{\textstyle\cdot}\!}$
on a normed space\/ ${(\X,\|\cdot\|)}.$ Then
\vskip4pt\noi
\begin{description}
\item{$\kern-8pt$\rm(a)$\kern2pt$}
there is a norm\/ ${\|\cdot\|_\vphi}$ on\/ $\X$ for which\/ $T$ is an
isometry on\/ ${(\X,\|\cdot\|_\vphi)}$.
\vskip4pt\noi
\item{$\kern-8pt$\rm(b)$\kern2pt$}
If a vector\/ ${y\in\X}$ is weakly l-sequentially supercyclic for\/ $T$ when
it acts on the normed space\/ ${(\X,\|\cdot\|)}$, then the same vector\/
${y\in\X}$ is weakly l-sequentially supercyclic for\/ $T$ when it acts on the
normed space\/ ${(\X,\|\cdot\|_\vphi)}$.
\vskip4pt\noi
\item{$\kern-8pt$\rm(c)$\kern2pt$}
If\/ $T$ is weakly l-sequentially supercyclic when acting on\/
${(\X,\|\cdot\|)}$, then it has an extension\/ $\what T$ on the
completion\/ ${(\what\X,\|\cdot\|_{\what\X})}$ of\/ ${(\X,\|\cdot\|_\vphi)}$
which is a weakly l-sequentially supercyclic isometric isomorphism\/.
\vskip4pt\noi
\item{$\kern-8pt$\rm(d)$\kern2pt$}
If\/ $T$ on\/ ${(\X,\|\cdot\|)}$ is weakly stable, then\/ $\what T$ on\/
${(\what\X,\|\cdot\|_{\what\X})}$ is weakly stable.
\end{description}
\end{theorem}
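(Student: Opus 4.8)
The plan is to build the new norm by the classical Nagy--Banach-limit averaging and then transport each property across the change of norm and the completion, exploiting throughout the single inequality that power boundedness imposes on orbit norms.

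For (a) I would set $\|x\|_\vphi=\vphi(\{\|T^nx\|\}_{n\ge0})$, which is legitimate since $\{\|T^nx\|\}$ is bounded by $M\|x\|$ with $M=\sup_n\|T^n\|$. Subadditivity and homogeneity of $\|\cdot\|_\vphi$ are inherited from those of $\|\cdot\|$ together with the linearity and positivity of $\vphi$, and the shift-invariance $\vphi(\{\xi_{n+1}\})=\vphi(\{\xi_n\})$ gives $\|Tx\|_\vphi=\vphi(\{\|T^{n+1}x\|\})=\|x\|_\vphi$, i.e. $T$ is a $\|\cdot\|_\vphi$-isometry. The one point needing the hypotheses is definiteness. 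Here I would use that power boundedness yields $\|T^nx\|=\|T^{n-k}T^kx\|\le M\|T^kx\|$ for all $n\ge k$, whence $\limsup_n\|T^nx\|\le M\inf_k\|T^kx\|$; thus $\inf_k\|T^kx\|=0$ would force $\|T^nx\|\to0$, which is impossible for $x\ne0$ since $T$ is of class $C_{1{\textstyle\cdot}}$. Consequently $\liminf_n\|T^nx\|>0$ for $x\ne0$, and as $\|x\|_\vphi\ge\liminf_n\|T^nx\|$ we get $\|x\|_\vphi>0$. The same estimate gives $\|x\|_\vphi\le\limsup_n\|T^nx\|\le M\|x\|$, a fact used repeatedly below.

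Parts (b) and (d) rest on the observation that $\|\cdot\|_\vphi\le M\|\cdot\|$ forces the inclusion $(\X,\|\cdot\|_\vphi)^*\sse(\X,\|\cdot\|)^*$. For (b): given $x$, the scalars $\{\alpha_i\}$ and subsequence $\{n_i\}$ delivered by weak l-sequential supercyclicity in $\|\cdot\|$ satisfy $f(\alpha_iT^{n_i}y-x)\to0$ for every $f\in(\X,\|\cdot\|)^*$, hence a fortiori for every $f$ in the smaller dual $(\X,\|\cdot\|_\vphi)^*$, so the same $y$ works in $\|\cdot\|_\vphi$. For (d), after (c) identifies $\what\X^*$ with $(\X,\|\cdot\|_\vphi)^*\sse(\X,\|\cdot\|)^*$, weak stability of $T$ in the original norm gives $g(\what T^nx)=g(T^nx)\to0$ for $x\in\X$ and $g\in\what\X^*$; since $\what T$ is an isometry, $\|\what T^n\|=1$, so uniform boundedness upgrades this from the dense subspace $\X$ to all of $\what\X$ by the standard estimate $|g(\what T^n\what x)|\le\|g\|\,\|\what x-x_m\|_{\what\X}+|g(\what T^nx_m)|$ with $x_m\to\what x$.

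For (c) I would complete $(\X,\|\cdot\|_\vphi)$ to a Banach space $(\what\X,\|\cdot\|_{\what\X})$ and extend the $\|\cdot\|_\vphi$-isometry $T$ by density to an isometry $\what T$ on $\what\X$, so that $\what\X^*$ is canonically $(\X,\|\cdot\|_\vphi)^*$. Surjectivity of $\what T$ is clean: by the Remark, weak supercyclicity makes $\R(T)$ weakly, hence norm, dense in $(\X,\|\cdot\|)$, and $\|\cdot\|_\vphi\le M\|\cdot\|$ then makes $\R(T)$ dense in $\what\X$; since $\what T(\what\X)\supseteq T(\X)=\R(T)$ is dense while $\what T(\what\X)$ is complete and hence closed, we get $\what T(\what\X)=\what\X$, so $\what T$ is an isometric isomorphism. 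The delicate part, and the main obstacle, is promoting weak l-sequential supercyclicity of $y$ from the dense subspace $\X$ (granted by (b)) to all of $\what\X$: given $\what x\in\what\X$ and $x_m\to\what x$ with $x_m\in\X$, each $x_m$ is a weak-sequential limit of projective-orbit elements, and I would extract a diagonal sequence weakly convergent to $\what x$, testing against a countable norming subset of $\what\X^*$ furnished by separability of $\what\X$. The subtlety is that the admissible scalars $\alpha_i$ may be unbounded, so the diagonal sequence need not be norm-bounded, and a countable norming set detects weak convergence only on bounded sets; the crux is therefore to perform the selection so that the diagonal sequence stays bounded (using that each weakly convergent orbit sequence is bounded and that $\|x_m\|_{\what\X}\to\|\what x\|_{\what\X}$), after which convergence against the norming set upgrades to full weak convergence in $\what\X$.
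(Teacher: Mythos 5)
Parts (a) and (b) of your proposal are correct and are essentially the paper's own argument: the Banach-limit norm $\|x\|_\vphi=\vphi(\{\|T^nx\|\})$, the bound $\|x\|_\vphi\le M\|x\|$, shift-invariance of $\vphi$ for the isometry property, and $\liminf_n\|T^nx\|\le\vphi(\{\|T^nx\|\})$ for definiteness; your inequality $\limsup_n\|T^nx\|\le M\inf_k\|T^kx\|$, showing that class $C_{1{\textstyle\cdot}}$ forces $\liminf_n\|T^nx\|>0$, in fact makes explicit a point the paper only asserts. Your (d) --- the dual inclusion $\X_\vphi^*\sse\X^*$ followed by a density-plus-uniform-boundedness upgrade from $\X$ to $\what\X$ using $\|\what T^n\|=1$ --- is the paper's two-step argument (i)--(ii) in different words, and your surjectivity argument in (c) (dense range via Remark 2.1, closed range since $\what T$ is an isometry on a Banach space) is also the paper's.

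The genuine gap is the step you yourself single out as the crux of (c): promoting weak l-sequential supercyclicity of $y$ from the dense subspace $\wtil\X=J(\X_\vphi)$ to all of $\what\X$. Your plan --- diagonalize over $x_m\to\what x$ and test against a countable norming subset of $\what\X^*$ --- requires the diagonal sequence to be norm bounded, and the two facts you invoke cannot deliver that. For each fixed $m$ the weakly convergent sequence $\wtil\alpha_j(\wtil x_m)\,\wtil T^{n_j}\wtil y$ is indeed bounded, say by $C_m$, but nothing ties the $C_m$ together: a sequence can converge weakly to $x_m$ while its norms stay near an arbitrarily large $C_m$ (in a Hilbert space take $x_m+m\kern1pt e_j$ with $\{e_j\}_j$ orthonormal, which converges weakly to $x_m$ as $j\to\infty$ with norms of order $m$), so $\|x_m\|_{\what\X}\to\|\what x\|_{\what\X}$ gives no control whatsoever on the selected elements. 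Nor is this a removable technicality: the weak sequential closure operator is not idempotent --- this is precisely why weak l-sequential supercyclicity is a distinct notion in Shkarin's hierarchy --- so a bare double-limit extraction cannot succeed without some quantitative uniformity. Compare with the paper's Claim 1, part (2): there, for $\veps=\frac1i$, one picks $k(i)$ from the norm convergence $\wtil x_k\to\what x$ and then a single index $j_{\veps,k}$ declared to work simultaneously for every unit functional $\what f\in\what\X^*$; it is exactly this uniformity of the inner convergence over the dual unit ball that makes the diagonal scalars $\what\alpha_i=\wtil\alpha_{j(i)}(\wtil x_{k(i)})$ independent of $\what f$ and yields $(**)$ with no norming sets or boundedness considerations at all. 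That uniform-in-$\what f$ choice of $j_{\veps,k}$ is a strong assertion (read literally it says the orbit sequence approaches $\wtil x_k$ in norm, not merely weakly), and whether one regards it as licensed or as needing its own justification, it is the ingredient your sketch lacks; absent it, or absent a proof that the scalars $\wtil\alpha_j(\wtil x_m)$ can be selected with a bound uniform in $m$ (which you do not provide), your part (c) remains unproved.
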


\begin{proof}
(a)
Let ${\vphi\!:\ell_+^\infty\!\to\CC}$ be a Banach limit$.$ Take the norm
${\|\cdot\|\!:\X\!\to\RR}$ on $\X.$ Suppose ${T\in\BX}$ is power
bounded$.$ Since ${\{\|T^nx\|\}\in\ell_+^\infty}$ set for each $x$ in $\X$
$$
\|x\|_\vphi\!=\vphi(\{\|T^nx\|\}).
$$
Since $T$ is power bounded, and since a Banach limit $\vphi$ is
order-preserving for real-valued bounded sequences (i.e.,
$\vphi(\{\xi_n\})\le\vphi(\{\upsilon_n\})$ if ${\xi_n\le\upsilon_n}$ in $\RR$
for every $n$) with $\vphi(\{1,1,1,\dots\})=1$, then for every ${x\in\X}$
$$
\|x\|_\vphi\!\le{\sup}_n\|T^n\|\,\|x\|.
$$
This defines a seminorm ${\|\cdot\|_\vphi\!:\X\!\to\RR}$ on $\X.$ However, as
the power bounded operator $T$ is of class $C_{1{\textstyle\cdot}}$, then
${0<\liminf_n\|T^nx\|}$ for ${x\ne0}.$ Since
$\liminf_n\xi_n\le\vphi(\{\xi_n\})$ for every real-valued bounded sequence and
any Banach limit $\vphi$, then $\|\cdot\|_\vphi$ becomes a norm on $\X.$
Consider the normed space ${(\X,\|\cdot\|_\vphi)}.$ For simplicity write $\X$
for the normed space ${(\X,\|\cdot\|)}$ and write $\X_\vphi$ for the normed
space ${(\X,\|\cdot\|_\vphi)}$ --- same underlying linear space $\X.$ The norm
$\|\cdot\|_\vphi$ makes the operator $T$ into an isometry when acting on
$\X_\vphi$ (i.e., ${T\in\B[\X_\vphi]}$ is an isometry)$.$ Indeed, as a
Banach limit $\vphi$ is backward-shift-invariant, then for every ${x\in\X}$
$$
\|Tx\|_\vphi=\vphi(\{\|(T^{n+1}x)\|\})
=\vphi(\{\|(T^nx)\|\})=\|x\|_\vphi.
$$

\vskip4pt\noi
(b)
Since ${T\in\BX}$ is power bounded and of class $C_{1{\textstyle\cdot}}$,
then by the previously dis\-played inequality the norms are related by
${\|\cdot\|_\vphi\le\beta\kern1pt\|\cdot\|}$ with ${\beta=\sup_n\|T^n\|}.$ So
$$
\sup_{x\ne0}\frac{|f(x)|}{\|x\|}
\le\beta\,\sup_{x\ne0}\frac{|f(x)|}{\|x\|_\vphi}
$$
for every linear functional ${f\!:\X\!\to\CC}.$ As $\X^*$ is the dual of $\X$,
let $\X^*_\vphi$ denote the dual of $\X_\vphi.$ By the above inequality if
${f\kern-2pt:\kern-1pt\X\kern-2pt\to\kern-1pt\CC}$ is bounded as a linear
functional on ${\X_\vphi\kern-1pt=(\X,\|\cdot\|_\vphi)}$, then it is bounded
as a linear functional on ${\X=(\X,\|\cdot\|)}.$ Thus
$$
\X^*_\vphi\sse\X^*.
$$
Therefore if ${x,y\in\X}$ and if ${f(\alpha_iT^{n_i}y-x)\to0}$ for every
${f\in\X^*}\!$ then, in particular, ${f(\alpha_iT^{n_i}y-x)\to0}$ for
every ${f\!\in\!\X^*_\vphi}$, proving item (b).

\vskip6pt\noi
(c)
Consider the completion $(\what\X,\|\cdot\|_{\what\X})$ of the normed space
$(\X,\|\cdot\|_\vphi).$ Write $\what\X$ for $(\what\X,\|\cdot\|_{\what\X})$
and so $\what\X$ is the completion of $\X_\vphi.$ Thus $\X_\vphi$ is densely
embedded in $\what\X$, which means there is an isometric isomorphism
${J\!:\X_\vphi\kern-1pt\to J(\X_\vphi)=\wtil\X}$ where $\wtil\X$ is dense in
$\what\X.$ Let ${\what T\kern-1pt\in\B[\what\X]}$ be the extension of
${T\kern-1pt\in\B[\X_\vphi]}$ so that its restriction to $\wtil\X$ is
$\what T|_{\wtil\X}={J\kern1ptTJ^{-1}\!\in\B[\wtil\X]}.$ Thus $\wtil\X$ is
$\what T$-invariant and
$T={J^{-1}\what T|_{\wtil\X}J}={J^{-1}\what TJ\in\B[\X_\vphi]}$ as in the
following commutative diagram (the symbol $\kern-1pt\dsse\kern-1pt$ means
densely included):
\vskip9pt\noi
$$
\matrix{
\kern7pt\X_\vphi\phantom{\int_|}\kern-5pt
& \kern-10pt\buildrel J\over\emap
& \kern-18pt\wtil\X=J(\X_\vphi)
& \kern-30pt\dsse
& \kern-25pt\what\X                                                   \cr
\kern-5pt\scriptstyle{T}\kern1pt\nmap
&
& \kern-7pt\scriptstyle{\nmap \what T|_{\wtil\X}\,=J\,TJ^{-1}}
&
& \kern-20pt\kern1pt\nmap\scriptstyle{\what T}                        \cr
\kern7pt\X_\vphi
& \kern-10pt\buildrel J\over\emap
& \kern-18pt \wtil\X=J(\X_\vphi)
& \kern-30pt\dsse
& \kern-25pt\;\what\X.                                                \cr}
$$
\vskip4pt

\vskip6pt\noi
{\it Claim 1}\/$.$
If there is a weakly l-sequentially supercyclic vector ${y\in\X_\vphi}$ for
${T\in\B[\X_\vphi]}$, then there is a weakly l-sequentially vector
${\wtil y\in\wtil\X\dsse\what\X}$ for ${\what T\in\B[\what \X]}$.

\vskip6pt\noi
{\it Proof}\/$.$
Consider the isometric isomorphism ${J\in\B[\X_\vphi,\wtil\X]}.$ Set
${\wtil T=\what T|_{\wtil\X}=J\kern1ptTJ^{-1}}$ in $\B[\wtil\X]$ so that
$\wtil T^n=(\what T|_{\wtil\X})^n=\what T^n|_{\wtil\X}=J\kern1ptT^nJ^{-1}$ for
every non\-negative integer $n$ (since $\wtil\X$ is $\what T$-invariant)$.$
We split the proof into two parts$.$

\vskip6pt\noi
(1) If there is a weakly l-sequentially supercyclic vector ${y\in\X_\vphi}$
for ${T\in\B[\X_\vphi]}$, then there is a weakly l-sequentially supercyclic
vector ${\wtil y\in\wtil\X}$ for ${\wtil T\in\B[\wtil\X]}$.

\vskip6pt\noi
(2) If there is a weakly l-sequentially supercyclic vector
${\wtil y\in\wtil\X}$ for ${\wtil T\in\B[\wtil\X]}$, then $\wtil y$ is a
weakly l-sequentially supercyclic vector for ${\what T\in\B[\what \X]}$.

\vskip6pt\noi
{\it Proof of\/ \rm(1)}\/$.$
If there exists a weakly l-sequentially supercyclic vector ${y\in\X_\vphi}$
for ${T\in\B[\X_\vphi]}$, then for each ${x\in\X_\vphi}$ there is a
sequence of nonzero numbers $\{\alpha_j(x)\}_{j\ge0}$ and a subsequence
$\{T^{n_j}\}_{j\ge0}$ of $\{T^n\}_{n\ge0}$ such that
$$
\alpha_j(x)\kern1ptT^{n_j}y\wconv x.
$$
Set ${\wtil y=Jy}.$ Every $f$ in ${\X_\vphi}^{\!\!\!*}=\B[\X_\vphi,\CC]$ is
of the form ${f=\wtil fJ}$ for some $\wtil f$ in $\wtil\X^*=\B[\wtil\X,\CC]$,
and every $\wtil x$ in $\wtil\X$ is of the form $\wtil x=Jx$ for some $x$ in
$\X_\vphi.$ Then for each ${\wtil x\in\wtil\X}$ set
$\wtil\alpha_j(\wtil x)=\wtil\alpha_j(Jx)=\alpha_j(x)$ with ${x\in\X_\vphi}.$
Take an arbitrary ${\wtil f\in\wtil\X^*}\!.$ Thus for each
${\wtil x\in\wtil\X}$ there is a sequence $\{\wtil\alpha_j(\wtil x)\}_{j\ge0}$
(independent of $\wtil f$) such that
\begin{eqnarray*}
\wtil f(\wtil\alpha_j(\wtil x)\kern1pt\wtil T^{n_j}\wtil y-\wtil x)
&\kern-6pt=\kern-6pt&
\wtil f(\alpha_j(x)\kern1ptJ\kern1ptT^{n_j}y-Jx)                          \\
&\kern-6pt=\kern-6pt&
\wtil fJ(\alpha_j(x)\kern1ptT^{n_j}y-x)
=f(\alpha_j(x)T^{n_j}y-x)
\end{eqnarray*}
with ${x\in\X_\vphi}$ and ${f\in{\X_\vphi}^{\!\!\!*}}.$ So if ${y\in\X_\vphi}$
is a weakly l-sequentially supercyclic vector for ${T\in\B[\X_\vphi]}$ such
that, for each ${x\in\X_\vphi}$, $\,{f(\alpha_j(x)T^{n_j}y-x)\to0}$ for every
${f\in{\X_\vphi}^{\!\!\!*}}$, then, for each ${\wtil x\in\wtil\X}$,
$\,{\wtil f(\wtil\alpha_j(\wtil x)\kern1pt\wtil T^{n_j}\wtil y-\wtil x)\to0}$
for every ${\wtil f\in\wtil\X}.$ Hence ${\wtil y=Jy}$ is a weakly
l-sequentially supercyclic vector for ${\wtil T\in\B[\wtil\X]}$, proving (1):
for every ${\wtil x\in\wtil\X}$
$$
\wtil \alpha_j\kern1pt(\wtil x)\kern1pt\wtil T^{n_j}\wtil y\wconv
\wtil x.
$$
\vskip-2pt

\vskip6pt\noi
{\it Proof of\/ \rm(2)}\/.
Take an arbitrary ${\what x\in\what\X}.$ Since ${\wtil\X^-\!=\what\X}$, there
is a sequence $\{\wtil x_k\}_{k\ge0}$, with
${\wtil x_k=\wtil x_k(\what x)\in\wtil\X}$ for each $k$, such that
${\wtil x_k\to\what x}.$ Then by the above convergence
$$
\wtil\alpha_j\kern1pt(\wtil x_k)\kern1pt\wtil T^{n_j}\wtil y
\jwconv\wtil x_k\kconv\what x.                                      \eqno(*)
$$
{\it This ensures the existence of a sequence of nonzero numbers
$\{\what\alpha_i(\what x)\}_{i\ge1}$ such that
\goodbreak\vskip4pt\noi
$$
\what\alpha_i(\what x)\kern1pt\what T^{n_i}\wtil y\wconv\what x    \eqno(**)
$$
for a subsequence}\/ $\{\what T^{n_i}\}_{i\ge1}$ of $\{\what T^n\}_{n\ge0}.$
\vskip6pt\noi
Indeed, consider both convergences in ($*$)$.$ Take an arbitrary ${\veps>0}.$
Thus by the second convergence in $(*)$ there exists a positive integer
$k_\veps$ such that ${\|\wtil x_k-\what x\|_{\what\X}\le\frac{\veps}{2}}$
whenever ${k\ge k_\veps}.$ Recall that every $\wtil f$ in $\wtil\X^*$ is of
the form $\wtil f=\what f|_{\wtil\X}$ for some $\what f$ in
$\what\X^*\!=\B[\what\X,\CC].$ Take an arbitrary $\what f$ in $\what\X^*$ with
${\|\what f\|=1}.$ By the first convergence in $(*)$, for each $k$ there
exists a positive integer $j_{\kern1pt\veps,k}$ such that
${|\wtil f(\wtil\alpha_j(\wtil x_k)\kern1pt\wtil T^{n_j}\wtil y-\wtil x_k)|
\le\frac{\veps}{2}}$
for $\wtil f=\what f|_{\wtil\X}$ for every ${\what f}$ with ${\|\what f\|=1}$
whenever ${j\ge j_{\veps,k}}.$ However, by taking ${k=k_\veps}$,
\begin{eqnarray*}
\big|\what f(\wtil\alpha_j(\wtil x_{k_\veps})\kern1pt\what T^{n_j}\wtil y
-\what x)\big|
&\kern-6pt=\kern-6pt&
\big|\what f|_{\wtil\X}(\wtil\alpha_j(\wtil x_{k_\veps})\kern1pt
(\what T|_{\wtil\X})^{n_j}\wtil y-\wtil x_{k_\veps})
+\what f(\wtil x_{k_\veps}-\what x)\big|                                  \\
&\kern-6pt\le\kern-6pt&
\big|\wtil f(\wtil\alpha_j(\wtil x_{k_\veps})\kern1pt\wtil T^{n_j}\wtil y
-\wtil x_{k_\veps})\big|+\|\wtil x_{k_\veps}\!-\what x\|_{\what\X}\le\veps
\end{eqnarray*}
whenever ${j\ge j_{\veps,k_\veps}}.$ Take an arbitrary integer ${i\ge1}$ and
set ${\veps=\frac{1}{i}}.$ Consequently, set $k(i)={k_\veps=k_{\frac{1}{i}}}$
and ${j(i)=j_{\veps,k_\veps}\!=j_{\frac{1}{i},k_i}}\!$ for every ${i\ge1}.$
So for every integer ${i\ge1}$ there is an integer ${j(i)\ge0}$ such
that, setting
$\wtil\alpha_j(\wtil x_{k(i)})
=\wtil\alpha_j(\wtil x_{k_\veps})
=\wtil\alpha_j(\wtil x_{k_\veps}(\what x))$,
we get
${|\what f(\wtil\alpha_j(\wtil x_{k(i)})\kern1pt\what T^{n_j}\wtil y
-\what x)|}\le\smallfrac{1}{i}$
whenever ${j\ge j(i)}.$ Then by taking ${j=j(i)}$,
$$
\big|\what f(\wtil\alpha_{j(i)}(\wtil x_{k(i)})\kern1pt\what T^{n_{j(i)}}
\wtil y-\what x)\big|\le\smallfrac{1}{i}
\quad\;\hbox{for every integer ${i\ge1}$}.
$$
Therefore for each ${\what x\in\what\X}$ there exists a sequence
$\{\wtil\alpha_{j(i)}(\wtil x_{k(i)})\}_{i\ge1}$ (that does not depend on
$\what f$) for which
${\what f(\wtil\alpha_{j(i)}(\wtil x_{k(i)})\kern1pt\what T^{n_{j(i)}}\wtil y
-\what x)\to0}$
for an arbitrary ${\what f\in\X^*}$ with ${\|\what f\|=1}$, and hence
${\what f(\wtil\alpha_{j(i)}(\wtil x_{k(i)})\kern1pt\what T^{n_{j(i)}}\wtil y
-\what x)\to0}$
for every ${\what f\in\X^*}\!$, which means.
$$
\wtil\alpha_{j(i)}
(\wtil x_{k(i)})\kern1pt\what T^{n_{j(i)}}\wtil y\wconv\what x.
$$
For each integer ${i\ge1}$ set
${\what\alpha_i(\what x)=\wtil\alpha_{j(i)}(\wtil x_{k(i)})
=\wtil\alpha_{j(i)}(\wtil x_{k(i)}(\what x))}$ and
${\what T^{n_i}\kern-1pt=\what T^{n_{j(i)}}}.$ Thus for each
${\what x\in\what\X}$ there is a sequence $\{\what\alpha_i(\what x)\}_{i\ge1}$
and a subsequence $\{\what T^{n_i}\}_{i\ge1}$ of $\{\what T^n\}_{n\ge0}$ such
that ($**$) holds$.$ This proves (2) with ${\wtil y\in\wtil\X\dsse\what\X}$:
for every ${\what x\in\what\X}$
$$
\what\alpha_i\kern1pt(\what x)\kern1pt\what T^{n_i}\wtil y\wconv\what x.
$$
So $\wtil y$ is a weakly l-sequentially supercyclic vector for
${\what T\in\B[\what\X]}$, proving Claim 1$.\!\!\!\qed$

\vskip6pt\noi
{\it Note}\/$.$
A norm topology version of Claim 1 (i.e., a version of Claim 1 for
supercyclicity) is known since Ansari--Bourdon's paper [1] although as far as
we are aware such a supercyclic case has been left without a proof so far
(see, e.g., [1, p.198])$.$ The weak counterpart proved in Claim 1 above
(i.e., the version for weak l-sequential supercyclicity) has a natural
transcription for the supercyclic case, thus supporting a proof (using the
same argument) for the strong (norm topology) version as well.

\vskip6pt\noi
Then by (a), (b), and Claim 1 we get (c)$.$ Indeed, since $T$ is an isometry
on $\X_\vphi$, then so is its extension $\what T$ on the completion $\what\X$
of $\X_\vphi.$ Since $\R(\what T)$ is dense by supercyclicity (Remark 2.1),
and closed as $\what T$ is a linear isometry on a Banach space, then $\what T$
is a surjective linear isometry, which means an isometric isomorphism.

\vskip6pt\noi
(d)
Take any ${x\in\X}.$ Consider the setup in the proof of items (b,c)$.$ Since
${\X^*_\vphi\sse\X^*}\!$, if ${f(T^nx)\to0}$ for every ${f\in\X^*}$, then in
particular ${f(T^nx)\to0}$ for every ${f\in\X^*_\vphi}$:
$$
\hbox
{if $T$ on $\X$ is weakly stable, then $T$ on $\X_\vphi$ is weakly stable.}
                                                                \eqno{\rm(i)}
$$
The same argument and notation as in proof of (c), where
${J\!:\X_\vphi\kern-1pt\to\wtil\X\kern-1pt\dsse\kern-1pt\what\X}$ is an
isometric isomorphism, lead to the identity (with ${\wtil x=Jx}$ and
$\wtil T=\what T|_{\wtil\X}=J\kern1ptTJ^{-1}$)
\goodbreak\vskip4pt\noi
$$
f(T^nx)
=\wtil fJ(T^nx)
=\wtil f(J\kern1ptT^nx)
=\wtil f(\wtil T^n\wtil x)
=\what f|_{\wtil\X}(\what T|_{\wtil\X})^n\wtil x)
=\what f(\what T^n\wtil x)
$$
for an arbitrary pair of vectors
$({x=J^{-1}\wtil x\in\X_\vphi}$,
$\,{\wtil x=Jx\in\wtil\X})$,
and an arbitrary triple of functionals
$({f\kern-1pt\in\kern-1pt\X_\vphi^*}$,
$\;{\wtil f\!=\kern-1ptfJ^{-1}
\!=\what f|_{\wtil\X}\kern-1pt\in\kern-1pt\wtil\X^*}\!$,
$\;{\what f\kern-1pt\in\kern-1pt\what\X^*}$
--- the extension by continuity of $\wtil f$ from $\wtil\X$ over
$\what\X=\wtil\X^-)$, where ${J\kern1ptT^n=\wtil T^nJ=\what T^nJ}$ for every
$n$ (as $\wtil\X$ is $\what T$-in\-variant), and
$\sup_n\|\what T^n\|=\sup_n\|T^n\|=\beta.$ Take an arbitrary ${\veps>0}.$
Since ${\wtil\X^-\!=\what\X}$, for each ${\what x\in\what\X}$ there is an
${x=J^{-1}\wtil x\in\X_\vphi}$ such that
${\|\what x-\wtil x\|_{\what\X}}<\veps.$ Also
${f(T^nx)=\what f(\what T^n\wtil x)}$ as seen above$.$ Hence for each
${\what x\in\what\X}$ there is an ${x\in\kern-1pt\X_\vphi}$ such that
$$
\big|\,|f(T^nx)|-|\what f(\what T^n\what x)|\,\big|
=\big|\,|\what f(\what T^n\wtil x)|-|\what f(\what T^n\what x)|\,\big|
<\|\what f\|\,\beta\,\veps,
$$
where $\|f\|=\|\wtil f\|=\|\what f\|.$ Thus if ${f(T^nx)\to0}$ for every
${f\kern-1pt\in\kern-1pt\X_\vphi^*}$, then ${\what f(\what T^n\what x)\to0}$
for every ${\what f\kern-1pt\in\kern-1pt\what\X^*}$ (the converse is
trivial)$.$ Therefore
$$
\hbox{$T$ on $\X_\vphi$ is weakly stable if and only if
$\what T$ on $\what\X$ is weakly stable}.                     \eqno{\rm(ii)}
$$
By (i) and (ii) we get the result in (d).
\end{proof}

\vskip0pt\noi
\begin{remark}
(a) {\it There is no weakly l-sequentially supercyclic nonsurjective
isometry}\/$.$ In other words, every weakly l-sequentially supercyclic
isometry on a Banach space is surjective, thus an isometric isomorphism$.$
Indeed, isometries on a Banach space have closed range (norm topology, since
isometries are bounded below), and any form of supercyclicity leads to a dense
range (any topology, Remark 2.1)$.$ Thus a nonsurjective isometry is not
weakly supercyclic, and so it is not weakly l-sequentially supercyclic.

\vskip6pt\noi
(b) {\it There is no weakly l-sequentially supercyclic compact operator of
class}\/ $C_{1{\textstyle\cdot}}.$ In fact, a weakly l-sequentially
supercyclic compact operator $T$ is quasinilpotent (i.e., ${r(T)=0}$)
\cite[Theorem 4.2]{KD2}, and so uniformly stable, thus of class
$C_{00}$.
\end{remark}

\vskip4pt
If the norm ${\|\cdot\|}$ of $\X$ in Theorem 3.1 is induced by an inner
product ${\<\cdot\,;\cdot\>}$, then the norm $\|\cdot\|_{\what\X}$ is also
induced by an inner product and so $\what\X$ is a Hilbert space$.$ In this
case a Hilbert space version of Theorem 3.1 can be stated, where the new norm
that makes $T$ into an isometry is now given by
$\|x\|_\vphi^2=\vphi(\{\<T^nx\,;T^nx\>\})$ for each ${x\in\X}$, and an
isometric isomorphism now means a unitary transformation.

\vskip3pt\noi
\begin{theorem}
Let\/ $T$ be a power bounded operator of class\/ $C_{1{\textstyle\cdot}\!}$
on an inner product space\/ ${(\X,\<\cdot\,;\cdot\>)}.$ Then
\vskip4pt\noi
\begin{description}
\item{$\kern-7pt$\rm(a)$\kern3pt$}
there is an inner product ${\<\cdot\,;\cdot\>_\vphi}\kern-1pt$ on $\X\!$
for which $T\!$ is an isometry on ${(\X,\<\cdot\,;\cdot\>_{\vphi})}$.
\vskip4pt\noi
\item{$\kern-8pt$\rm(b)$\kern2pt$}
If a vector\/ ${y\in\X}$ is weakly l-sequentially supercyclic for\/ $T$
when it acts on the inner product space\/ ${(\X,\<\cdot\,;\cdot\>)}$, then
the same vector\/ ${y\in\X}$ is weakly l-se\-quentially supercyclic for\/
$T\kern-1pt$ when it acts on the inner product space\/
${(\X,\<\cdot\,;\cdot\>_{\vphi})}$.
\vskip4pt\noi
\item{$\kern-7pt$\rm(c)$\kern3pt$}
If\/ $T$ is weakly l-sequentially supercyclic when acting on\/
${(\X,\<\cdot\,;\cdot\>)}$, then it has an extension\/ $\what T$ on the
completion\/ ${(\what\X,\<\cdot\,;\cdot\>_{\what\X})}$ of\/
${(\X,\<\cdot\,;\cdot\>_\vphi)}$ which is a
weakly l-sequentially supercyclic unitary transformation\/.
\vskip4pt\noi
\item{$\kern-8pt$\rm(d)$\kern2pt$}
If\/ $T$ on\/ ${(\X,\<\cdot\,;\cdot\>)}$ is weakly stable, then\/
$\what T$ on\/ ${(\what\X,\<\cdot\,;\cdot\>_{\what\X})}$ is weakly stable.
\end{description}
\end{theorem}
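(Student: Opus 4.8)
The plan is to treat Theorem 3.2 as the inner-product refinement of Theorem 3.1: the only genuinely new work lies in part (a), where the seminorm built from a Banach limit must be replaced by a sesquilinear form, after which parts (b), (c), (d) transfer almost verbatim from the proof of Theorem 3.1 once one observes that the induced norm is again dominated by $\beta\kern1pt\|\cdot\|$ and that a surjective isometry on a Hilbert space is automatically unitary.

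For (a) I would fix a Banach limit $\vphi$ and define, for $x,y\in\X$,
$$
\<x\,;y\>_\vphi=\vphi(\{\<T^nx\,;T^ny\>\}),
$$
noting that $\{\<T^nx\,;T^ny\>\}\in\ell_+^\infty$ by power boundedness and the Cauchy--Schwarz inequality. Sesquilinearity is immediate from the complex-linearity of $\vphi$ together with the sesquilinearity of $\<\cdot\,;\cdot\>$. For positive-definiteness, observe that $\<x\,;x\>_\vphi=\vphi(\{\|T^nx\|^2\})$ is real and nonnegative by order preservation, and strictly positive for $x\ne0$ because $T$ of class $C_{1{\textstyle\cdot}}$ gives $0<\liminf_n\|T^nx\|^2\le\vphi(\{\|T^nx\|^2\})$, exactly as in Theorem 3.1. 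The induced norm is then $\|x\|_\vphi=\vphi(\{\|T^nx\|^2\})^{1/2}$, matching the formula in the remark preceding the statement. Finally, backward-shift invariance of $\vphi$ yields $\<Tx\,;Ty\>_\vphi=\vphi(\{\<T^{n+1}x\,;T^{n+1}y\>\})=\<x\,;y\>_\vphi$, so $T$ preserves $\<\cdot\,;\cdot\>_\vphi$ and is an isometry on $(\X,\<\cdot\,;\cdot\>_\vphi)$.

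The step I expect to be the main obstacle is conjugate symmetry of $\<\cdot\,;\cdot\>_\vphi$, since one needs $\vphi(\{\overline{\xi_n}\})=\overline{\vphi(\{\xi_n\})}$, which is not a formal consequence of the linearity of $\vphi$ alone. This is precisely where the real-valuedness of a Banach limit on real sequences enters: splitting $\xi_n=a_n+ib_n$ and invoking order preservation shows $\vphi(\{a_n\})$ and $\vphi(\{b_n\})$ are real, whence $\vphi(\{\overline{\xi_n}\})=\vphi(\{a_n\})-i\vphi(\{b_n\})=\overline{\vphi(\{\xi_n\})}$. Combined with $\<T^ny\,;T^nx\>=\overline{\<T^nx\,;T^ny\>}$, this gives $\<y\,;x\>_\vphi=\overline{\<x\,;y\>_\vphi}$ and confirms that $\<\cdot\,;\cdot\>_\vphi$ is a genuine inner product rather than merely a positive sesquilinear form.

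For (b) and (d) I would record that $\|x\|_\vphi^2=\vphi(\{\|T^nx\|^2\})\le\sup_n\|T^nx\|^2\le\beta^2\|x\|^2$ with $\beta=\sup_n\|T^n\|$ still delivers $\|\cdot\|_\vphi\le\beta\kern1pt\|\cdot\|$, hence $\X^*_\vphi\sse\X^*$; the reasoning of Theorem 3.1(b) and 3.1(d) then applies word for word, as it uses only this inclusion together with the isometric isomorphism $J$ and the density of $\wtil\X$ in $\what\X$. For (c), the completion $\what\X$ of the inner product space $(\X,\<\cdot\,;\cdot\>_\vphi)$ is a Hilbert space, Claim 1 carries over unchanged (its argument is purely normed-space reasoning on the isometry $J$ and a diagonal extraction), and $\what T$ is again a surjective isometry by the dense range from Remark 2.1 together with the closedness of the range of an isometry on a complete space. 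The single upgrade over Theorem 3.1 is the final identification: a surjective isometry on a Hilbert space preserves the inner product by polarization and is therefore unitary, so $\what T$ is a weakly l-sequentially supercyclic unitary transformation, as asserted.
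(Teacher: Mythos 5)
Your proposal is correct and follows essentially the same route as the paper, which defines ${\<x\,;z\>_\vphi=\vphi(\{\<T^nx\,;T^nz\>\})}$, observes that class $C_{1{\textstyle\cdot}}$ upgrades the resulting semi-inner product to a genuine inner product, and then simply runs the proof of Theorem 3.1 with the seminorm replaced by ${\|x\|_\vphi^2=\vphi(\{\|T^nx\|^2\})}$ and ``isometric isomorphism'' read as ``unitary transformation.'' Your explicit check of conjugate symmetry, deducing ${\vphi(\{\overline{\xi_n}\})=\overline{\vphi(\{\xi_n\})}}$ from the reality of a Banach limit on real sequences via positivity, is in fact slightly more careful than the paper's parenthetical, which attributes this identity to linearity alone.
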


\begin{proof}
Essentially the same argument as in proof of Theorem 3.1.

\vskip6pt\noi
Let ${\vphi\!:\ell_+^\infty\!\to\kern-1pt\CC}$ be a Banach limit and let
${\|\cdot\|\!:\X\!\to\kern-1pt\RR}$ be the norm induced by the inner product
${\<\cdot\,;\cdot\>\!:\X\times\X\!\to\CC}.$ Suppose ${\!T\in\kern-1pt\BX}$
is power bounded$.$ Thus set
\goodbreak\vskip4pt\noi
$$
\<x\,;z\>_\vphi\!=\vphi(\{\<T^nx\,;T^nz\>\})
$$
for each ${x,z}$ in $\X.$ Since $\vphi$ is linear (which implies
$\vphi(\overline{\{\xi_n\}})=\overline\vphi(\{\xi_n\})$) and positive
(i.e., ${0\le\vphi(\{\xi_n\})}$ whenever ${0\le\xi_n}$ for every $n$), and
since $T$ is linear, then it is readily verified that
${\<\cdot\,;\cdot\>_\vphi\!:\X\times\X\!\to\CC}$ is a semi-inner product on
$\X.$ Hence
$$
\|x\|_\vphi^2=\vphi(\{\|T^nx\|^2\})
$$
for every ${x\in\X}$ where ${\|\cdot\|_\vphi\!:\X\!\to\RR}$ is the seminorm
induced by the semi-inner product
${\<\cdot\,;\cdot\>_\vphi}$ so that
${\|x\|_\vphi\le\sup_n\|T^n\|\|x\|}.$ (Even in this case of norms of a power
sequence of a power bounded operator, the squares in the above identity cannot
be omitted due to the nonmultiplicativity of Banach limits)$.$ From now on the
proof develops as the proof for Theorem 3.1, where the previous seminorm is
replaced by the above one, which becomes a norm under the same assumption of
$T$ being of class $C_{1{\textstyle\cdot}}.$ As before we work with the norms
${\|\cdot\|}$ and ${\|\cdot\|_\vphi}$ on $\X$ regardless the fact that they
may have been induced by inner products, and again write $\X$ for the inner
product space ${(\X,\<\cdot,\cdot\>)}$ and $\X_\vphi$ for the inner product
space ${(\X,\<\cdot,\cdot\>_\vphi)}$.
\end{proof}

\vskip0pt\noi
\begin{remark}
Let $T$ be a weakly stable (thus power bounded) weakly l-sequentially
supercyclic operator of class $C_{1{\textstyle\cdot}}$ on an inner product
space$.$ Consider Theorem 3.2$.$ Since $\what T$ is an isometry, it is not
supercyclic \cite[Theorem 2.1]{AB}$.$ Since $\what T$ is a weakly
l-sequentially supercyclic and weakly stable unitary operator, then it is
singular-continuous \cite[Theorem 4.2]{Kub1} and its scalar spectral measure
is Rajchman$.$ Recall$:$ a unitary operator on a Hilbert space is weakly
stable if and only if its scalar spectral measure is a Rajchman measure (i.e.,
a measure $\mu$ on the $\sigma$-algebra of Borel subsets of the unit circle
$\TT$ for which ${\int_\TT\lambda^k\,d\mu\to0}$ for${|k|\to\infty}$)
\cite[Proposition 3.3]{Kub1}.
\vskip6pt\noi
{\narrower
{\it If\/ $T$ is a weakly stable weakly l-sequentially supercyclic power
bounded operator of class\/ $C_{1{\textstyle\cdot}\!\!}$ on an inner product
space, then it has an extension\/ $\what T$ on a Hilbert space which is a
weakly stable weakly l-sequentially supercyclic\/ $($but not supercyclic\/$)$
singular-continuous unitary operator\/ $($thus of class\/ $C_{11})$ whose
scalar spectral measure is a Rajchman measure}\/.
\vskip0pt}\noi
\end{remark}

\section{Two Applications}

Let $\sigma_{\kern-1ptP}(\,\cdot\,)$ stand for point spectrum$.$ Let
the continuous linear operator $T^*$ stand for the normed-space (or
topological-vector-space) adjoint of a continuous linear operator $T$ (if
$T$ is a Hilbert-space operator, then $T^*$ is identified with the
Hilbert-space adjoint of $T.)$ If $T$ is supercyclic, then
${\#\sigma_{\kern-1ptP}(T^*)\le1}$ (i.e., then the cardinality of the point
spectrum of $T^*$ is not greater than one)$.$ In other words, if $T$ is
supercyclic, then the adjoint of $T$ has at most one eigenvalue$.$ This has
been verified for supercyclic operators in a Hilbert-space setting in
\cite[Proposition 3.1]{Her}, extended to operators on a normed space in
\cite[Theorem 3.2]{AB}), and further extended to supercyclic operators on a
locally convex space in \cite[Lemma 1, Theorem 4]{Per}$.$ But the weak
topology of a normed space is a locally convex subtopology of the locally
convex norm topology (see, e.g., \cite[Theorems 2.5.2 and 2.2.3]{Meg})$.$ Then
the latter extension holds in particular on a normed space under the weak
topology, thus including weakly supercyclic operators and consequently weakly
l-sequentially supercyclic operators on normed spaces:
$$
\hbox{If $T$ is weakly l-sequentially supercyclic,
then ${\#\sigma_{\kern-1ptP}(T^*)\le1}$}.
$$
We show next that ${\#\sigma_{\kern-1ptP}(T^*)=\#\sigma_{\kern-1ptP}(T)=0}$
if the weakly l-sequentially super\-cyclic power bounded operator $T$ is of
class $C_{1{\textstyle\cdot}}.$ Therefore ${\#\sigma_{\kern-1ptP}(T^*)=1}$
only if ${T^nx\to0}$ for some nonzero ${x\in\X}$.

\vskip3pt\noi
\begin{corollary}
Let\/ $T$ be a power bounded operator of class\/ $C_{1{\textstyle\cdot}\!}$
on a Hilbert space\/ $\X.$ If\/ $T$ is weakly l-sequentially supercyclic, then
$$
\sigma_{\kern-1ptP}(T)=\sigma_{\kern-1ptP}(T^*)=\void.
$$
\end{corollary}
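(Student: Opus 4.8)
The plan is to dispose of the two point spectra by different routes: $\sigma_{\kern-1ptP}(T)=\void$ will follow at once from the unitary extension of Theorem 3.2, whereas $\sigma_{\kern-1ptP}(T^*)=\void$ will need a hands-on orbit argument. For the first, suppose $Tx=\lambda x$ with $x\ne0$. Power boundedness gives $|\lambda|\le1$ (since $\|T^nx\|=|\lambda|^n\|x\|$ stays bounded) and the class $C_{1{\textstyle\cdot}}$ gives $|\lambda|\ge1$ (since $|\lambda|^n\|x\|\not\to0$), so $|\lambda|=1$. Using the embedding $J$ of Theorem 3.2, the vector $Jx$ is nonzero in $\what\X$ because $\|Jx\|_{\what\X}=\|x\|_\vphi>0$ ($\|\cdot\|_\vphi$ is a genuine norm, as $T$ is $C_{1{\textstyle\cdot}}$), and $\what T(Jx)=J\,Tx=\lambda\,Jx$, so $\lambda\in\sigma_{\kern-1ptP}(\what T)$. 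But $\what T$ is a weakly l-sequentially supercyclic unitary, hence singular-continuous by (H), so its scalar spectral measure is non-atomic and $\sigma_{\kern-1ptP}(\what T)=\void$ --- a contradiction. Hence $\sigma_{\kern-1ptP}(T)=\void$.

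For $\sigma_{\kern-1ptP}(T^*)$, let $y$ be a weakly l-sequentially supercyclic vector and suppose $T^*f=\lambda f$ with $f\ne0$; again $|\lambda|\le1$, and $\langle T^ny,f\rangle=\bar\lambda^n\langle y,f\rangle$. Writing $c=\langle y,f\rangle$, I would first note $c\ne0$, for otherwise $f$ annihilates the weakly dense projective orbit of $y$ and so vanishes; and $\lambda\ne0$, for otherwise $f\in\ker T^*=\R(T)^\perp$ would contradict the norm-density of $\R(T)$ from Remark 2.1. It then remains to exclude $0<|\lambda|<1$ and $|\lambda|=1$.

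If $|\lambda|=1$, replace $T$ by $\lambda T$ (same projective orbit, same $\|T^n\|$, and $(\lambda T)^*f=f$) to assume $\lambda=1$; then $\langle T^ny,f\rangle=c$ for all $n$, so the whole orbit $\{T^ny\}$ lies on the affine hyperplane $H=\{v:\langle v,f\rangle=c\}$. For any $v\in H$, supercyclicity yields $\alpha_iT^{n_i}y\wconv v$; testing against $f$ forces $\alpha_ic\to c$, i.e. $\alpha_i\to1$, whence $T^{n_i}y\wconv v$, and weak lower semicontinuity of the norm gives $\|v\|\le\liminf_i\|T^{n_i}y\|\le\sup_n\|T^n\|\,\|y\|$, a bound independent of $v$ --- absurd, since $H$ is an unbounded hyperplane (here $\dim\X\ge2$). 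If instead $0<|\lambda|<1$, I would pick a target $x$ with $\langle x,f\rangle\ne0$ lying off the countably many lines $\span\{T^Ny\}$; then in $\alpha_iT^{n_i}y\wconv x$ the relation $\alpha_i\bar\lambda^{n_i}c\to\langle x,f\rangle\ne0$ forces $n_i\to\infty$ (a bounded $n_i$ would put $x$ on one of those lines) and hence $|\alpha_i|\to\infty$; as weakly convergent sequences are norm-bounded, $\|T^{n_i}y\|\le M/|\alpha_i|\to0$. Since $T$ is a $\|\cdot\|_\vphi$-isometry, $\|y\|_\vphi=\|T^{n_i}y\|_\vphi\le\sup_n\|T^n\|\,\|T^{n_i}y\|\to0$, contradicting $\|y\|_\vphi>0$. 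Thus no such $\lambda$ exists and $\sigma_{\kern-1ptP}(T^*)=\void$.

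The hard part will be this last step for $T^*$. Power boundedness alone only bounds eigenvalues by $1$, so the modulus-one and modulus-less-than-one cases must be beaten separately, and the two key tricks are exactly where the work sits: that a power-bounded orbit cannot be weakly dense in an unbounded hyperplane, and that a subsequence of the orbit tending strongly to $0$ annihilates the isometric norm $\|\cdot\|_\vphi$ --- whereas such a subsequence would \emph{not}, by itself, contradict $C_{1{\textstyle\cdot}}$. I would also keep the standing hypothesis $\dim\X\ge2$ in view, which is harmless here since a singular-continuous unitary forces $\X$ to be infinite-dimensional.
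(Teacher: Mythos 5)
Your proposal is correct, and its second half takes a genuinely different route from the paper. For $\sigma_{\kern-1ptP}(T)=\void$ you do essentially what the paper's Parts 1 and 3 do: transport an eigenvector through $J$ into $\what\X$ and kill it with the singular continuity of the weakly l-sequentially supercyclic unitary $\what T$ (the paper phrases this via similarity, $\sigma_{\kern-1ptP}(T)=\sigma_{\kern-1ptP}(\what T|_{\wtil\X})\sse\sigma_{\kern-1ptP}(\what T)$, but it is the same argument). For $\sigma_{\kern-1ptP}(T^*)=\void$, however, the paper works structurally: it represents the Banach-limit inner product as ${\<x\,;z\>_\vphi=\<Ax\,;z\>}$ with ${A>O}$, ${T^*\!A\kern1ptT=A}$ and dense range (Claim 2), uses the identifications $\X^*\cong\R(A)^*\sse\X^*_\vphi$ and $\wtil\X^*\cong\what\X^*$ to push a hypothetical eigenvalue of $T^*$ up to $\what T^*$, and then needs \cite[Theorem 3.3]{KD2} (that $\what T^*$ is also weakly l-sequentially supercyclic) before invoking \cite[Theorem 4.2]{Kub1}. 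You avoid all of this duality transfer with a self-contained three-case orbit argument in the Ansari--Bourdon/Herrero spirit: ${c=\<y\,;f\>\ne0}$ by testing the weak limits against $f$; ${\lambda\ne0}$ from the dense range of Remark 2.1; ${|\lambda|=1}$ excluded because a power-bounded orbit cannot be weakly l-sequentially dense in the unbounded affine hyperplane $\{v:\<v\,;f\>=c\}$ (your normalization $\alpha_i\to1$ and weak lower semicontinuity of the norm are both sound); and ${0<|\lambda|<1}$ excluded by forcing ${n_i\to\infty}$, ${|\alpha_i|\to\infty}$, hence ${\|T^{n_i}y\|\to0}$. Every step checks. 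Two remarks. First, your closing comment that a norm-null subsequence ``would not, by itself, contradict $C_{1{\textstyle\cdot}}$'' is true only absent power boundedness: since ${\|T^my\|\le\beta\|T^{n_i}y\|}$ for ${m\ge n_i}$ with ${\beta=\sup_n\|T^n\|}$, a null subsequence nullifies the whole orbit, contradicting $C_{1{\textstyle\cdot}}$ directly --- this semigroup observation is precisely why the paper may assert ${0<\liminf_n\|T^nx\|}$ in Theorem 3.1(a), and it means your final detour through $\|\cdot\|_\vphi$ is dispensable, so your adjoint half in fact uses neither Theorem 3.2 nor the unitary extension, only power boundedness, $C_{1{\textstyle\cdot}}$, and weak l-sequential supercyclicity of $T$ itself. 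Second, your standing hypothesis ${\dim\X\ge2}$ is indeed harmless but is also implicitly shared by the paper: on $\CC$ the statement fails trivially for ${T=e^{i\theta}}$, so item (H), i.e.\ \cite[Theorem 4.2]{Kub1}, already carries the usual dimension convention of the supercyclicity literature. On balance, the paper's heavier route buys the operator $A$ and the dual identifications, which it reuses in Corollary 4.2; your route buys elementarity, independence from \cite[Theorem 3.3]{KD2}, and (once streamlined as above) a proof of the $T^*$ half that needs no completion at all.
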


\vskip0pt\noi
{\it Proof}\/$.$
Let $T$ be a power bounded operator of class $C_{1{\textstyle\cdot}}$ on a
normed space $\X.$ Consider the extension
${\what T\kern-1pt\in\kern-1pt\B[\what\X]}$ of
${T\kern-1pt\in\kern-1pt\B[\X_\vphi]}$ over the completion $\what \X$ of
$\X_\vphi$ as in the proof of Theorem 3.1$.$ We split this proof into three
parts.

\vskip6pt\noi
{\sc Part 1}$.$
Clearly $\sigma_{\kern-1ptP}(T)$ is the same regardless whether $T$ acts on
$\X={(\X,\|\cdot\|)}$ or in $\X_\vphi={(\X,\|\cdot\|_\vphi)}$ since point
spectrum is purely an algebraic notion and in both cases the linear $T$ acts
on the same linear space $\X.$ Let $T$ act on $\X_\vphi.$ Thus
$T=$ ${J^{-1}\what T|_{\wtil\X}J\in\B[\X_\vphi]}$ where
${J\in\B[\X_\vphi,\wtil\X]}$ is an isometric isomorphism and
$\wtil\X$ is $\what T$-in\-variant (cf$.$ proof of Theorem 3.1)$.$ Since
similarity preserves the spectrum and its parts, then
$\sigma_{\kern-1ptP}(T)=\sigma_{\kern-1ptP}(\what T|_{\wtil\X})
\sse\sigma_{\kern-1ptP}(\what T).$
Therefore for $T$ acting on $\X$ or on $\X_\vphi$
$$
\sigma_{\kern-1ptP}(T)\ne\void
\quad\limply\quad
\sigma_{\kern-1ptP}(\what T)\ne\void.
$$

\vskip4pt\noi
{\sc Part 2}$.$
The definition of adjoint involves topology and although the duals are nested
they may not coincide$.$ Let $T^*$ in $\B[\X^*]$ be the adjoint of $T$ acting
on $\X={(\X,\|\cdot\|)}.$ Use the same notation $T^*$ for the adjoint in
$\B[\X^*_\vphi]$ of $T$ acting on $\X_\vphi={(\X,\|\cdot\|_\vphi)}).$ First
suppose there exits ${\lambda\in\sigma_{\kern-1ptP}(T^*)}$ for $T$ acting on
$\X_\vphi$ which means $\lambda\kern1ptg=T^*\kern-1ptg$ for some nonzero
${g\in\X_\vphi^*}.$ Then ${g\in\X^*}$ and so
${\lambda\in\sigma_{\kern-1ptP}(T^*)}$ for $T$ acting on $\X.$ Thus the
inclusion ${\X_\vphi^*\sse\X^*}$ (which may be proper) allows us to infer
$$
\sigma_{\kern-1ptP}(T^*)\ne\void
\;\;\hbox{for $T$ acting on $\X_\vphi$}
\quad\limply\quad
\sigma_{\kern-1ptP}(T^*)\ne\void
\;\;\hbox{for $T$ acting on $\X$}.
$$
The converse requires a different argument$.$ Suppose $\X$ is a Hilbert
space and let ${T^*\!\in\BX}$ be the Hilbert-space adjoint of ${T\in\BX}$.

\vskip6pt\noi
{\it Claim 2}\/$.$
There exists a positive operator ${A\in\BX}$ (i.e., ${A>O}$) for which
$$
\<x\,;z\>_\vphi=\<Ax\,;z\>
\;\;\hbox{for every $x,z\in\X$,
\quad with $\;\R(A)^-\!=\X$,
\quad and
\quad $T^*\!A\kern1ptT=A$}.
$$

\vskip4pt\noi
{\it Proof}\/$.$
Since the inner product ${\<\cdot\,;\cdot\>_\vphi\!:\X\times\X\!\to\CC}$ is a
bounded sesquilinear form (i.e.,
$|\<x\,;z\>_\vphi|
=|\vphi(\{\<T^nx,T^nz\>\})|
\le\|\vphi\|\sup_n|\<T^nx\,;T^nz\>|
\le\beta^2\|x\|\,\|z\|$
where $\|\vphi\|=1$ and $\beta=\sup_n\|T^n\|$), then a classical result from
\cite[Theorem 2.28]{Sto} ensures the existence of such a positive operator
$A.$ Since ${A>O}$ is injective and self-adjoint, then the range $\R(A)$ of
$A$ is dense in ${(\X,\<\cdot\,;\cdot\>)}.$
Moreover since $\|Tx\|_\vphi=\|x\|_\vphi$,
$$
\<Ax\,;x\>=\<x\,;x\>_\vphi
=\<Tx\,;Tx\>_\vphi
=\<A\kern1ptTx\,;Tx\>
=\<T^*\!A\kern1ptTx\,;x\>
$$
for every ${x\in\X}$ and so, as ${A-T^*\!A\kern1ptT}$ is self-adjoint,
${A=T^*\!A\kern1ptT}.\!\!\!\qed$

\vskip6pt\noi
Take ${f\in\X^*}\!.$ Since $\X$ is a Hilbert space, ${f=\<\,\cdot\,;y\>}$ for
some ${y\in\X}.$ Take the restriction ${f|_{\R(A)}\!:\R(A)\sse\X\to\CC}.$
Since $fA={\<A\,\cdot\,;y\>=\<\,\cdot\,;y\>_\vphi\in\X^*_\vphi}$ we can
conclude that ${f|_{\R(A)}\in\X^*_\vphi}.$ Now take any ${g\in\R(A)^*}\!.$
Since ${\R(A)^-\!=\X}$, then by continuity there exists ${\what g\in\X^*}$
such that $g=\what g|_{\R(A)}$, which is in $\X^*_\vphi$ as we saw above$.$
So ${\R(A)^*\sse\X^*_\vphi}.$ Moreover, since ${\R(A)^-\!=\X}$, then
$\X^*\cong\R(A)^*$ where $\cong$ denotes isometrically isomorphic (see, e.g.,
\cite[Exercise 1.112, p.95]{Meg})$.$ Therefore
$$
\R(A)^-\!=\X
\quad\limply\quad
\X^*\cong\R(A)^*\sse\X^*_\vphi.
$$
Take an isometric isomorphism
${W\!:\X^*\!\to W(\X^*)=\R(A)^*\!\sse\X^*_\vphi}.$ If $\lambda$ lies in
${\sigma_{\kern-1ptP}(T^*)}$ for $T$ acting on $\X$, then there is a nonzero
$f$ in $\X^*$ for which ${\lambda f=T^*f}.$ Set $g=Wf$ in
$\X_\vphi^*.$ By definition of adjoint,
$\lambda\,g
=\lambda\,Wf
=W\lambda\kern1ptf=
W\kern1ptT^*f
=Wf\kern1ptT
=g\,T
=T^*g$
(same notation for $T^*$ on $\X^*$ or $\X^*_\vphi).$ Then
${\lambda\in\sigma_{\kern-1ptP}(T^*)}$ for $T$ acting on $\X_\vphi.$ Hence
$$
\sigma_{\kern-1ptP}(T^*)\ne\void
\;\;\hbox{for $T$ acting on $\X$}
\quad\limply\quad
\sigma_{\kern-1ptP}(T^*)\ne\void
\;\;\hbox{for $T$ acting on $\X_\vphi$}.                    \eqno{(\rm iii)}
$$
Next let $T$ act on $\X_\vphi$ and take its adjoint $T^*$ on $\X^*_\vphi.$
Since $T=J^{-1}\what T_{\wtil\X}J\in\B[\X_\vphi]$ we get
$T^* f
=f\kern1ptT
=fJ^{-1}\what T|_{\wtil\X}J
=(J^{-1}\what T|_{\wtil\X}J)^*f
=J^*(\what T|_{\wtil\X})^*J^{*-1}f$
for every ${f\in\X_\vphi^*}.$ Thus $T^*=J^*(\what T|_{\wtil\X})^*J^{*-1}$
(i.e., $T^*$ and $(\what T|_{\wtil\X})^*$ are isometrically isomorphic) and
so ${\sigma_{\kern-1ptP}(T^*)}=\sigma_{\kern-1ptP}((\what T|_{\wtil\X})^*).$
Also, ${\wtil\X^*\cong\what\X^*}$ as ${\wtil\X^-\!\!=\what\X}$ (see, e.g.,
\cite[Exercise 1.112]{Meg} again)$.$ Take an isometric isomorphism
${K\!:\wtil\X^*\!\to\what\X^*}\!$ and an arbitrary $\wtil f$ in $\wtil\X^*\!.$
By continuity $\wtil f=\what f|_{\wtil\X}=K^{-1}\what f$ for a unique
${\what f\in\what\X^*}.$ Then
$K(\what T|_{\wtil\X})^*\wtil f
=K\wtil f(\what T|_{\wtil\X})
=\what f\,\what T
=\what T^*\what f
=\what T^*K\wtil f.$
Hence $K(\what T|_{\wtil\X})^*\!=\what T^*K$ (i.e., $(\what T|_{\wtil\X})^*$
and $\what T^*$ are isometrically isomorphic)$.$ So
$\sigma_{\kern-1ptP}((\what T|_{\wtil\X})^*)=\sigma_{\kern-1ptP}(\what T^*).$
Thus
$\sigma_{\kern-1ptP}(T^*)
=\sigma_{\kern-1ptP}((\what T|_{\wtil\X})^*)
=\sigma_{\kern-1ptP}(\what T^*).$
Then
$$
\sigma_{\kern-1ptP}(T^*)\ne\void
\;\;\hbox{for $T$ acting on $\X_\vphi$}
\quad\limply\quad
\sigma_{\kern-1ptP}(\what T^*)\ne\void.                   \eqno{(\rm iv)}
$$
According to (iii) and (iv) we get
$$
\sigma_{\kern-1ptP}(T^*)\ne\void
\;\;\hbox{for $T$ acting on $\X$}
\quad\limply\quad
\sigma_{\kern-1ptP}(\what T^*)\ne\void.
$$

\vskip4pt\noi
{\sc Part 3}$.$
By Theorem 3.2 $\what T$ is a weakly l-sequentially supercyclic unitary
operator on the Hilbert space $\what\X.$ Then $\what T^*$ also is a weakly
l-sequentially supercyclic unitary on $\what\X$ \cite[Theorem 3.3]{KD2}$.$ But
weakly l-sequentially supercyclic unitary operators are singular-continuous
\cite[Theorem 4.2]{Kub1}, and so have no eigenvalues$:$
$\sigma_{\kern-1ptP}(\what T)=\sigma_{\kern-1ptP}(\what T^*)=\void.$
Therefore by Parts 1 and 2, for $T$ acting on the Hilbert space $\X$,
$$
\sigma_{\kern-1ptP}(T)=\sigma_{\kern-1ptP}(T^*)
=\sigma_{\kern-1ptP}(\what T)=\sigma_{\kern-1ptP}(\what T^*)
=\void. \eqno{\qed}
$$

\vskip0pt\noi
\begin{corollary}
Let\/ ${T\in\BX}$ be a power bounded operator of class\/
$C_{1{\textstyle\cdot}\!}$ acting on a Hilbert space\/ $\X.$ Let\/
${\|\cdot\|}$ be the norm generated by the inner product\/
${\<\cdot\,;\cdot\>}$ on\/ $\X.$ Consider the new inner product space\/
$\X_\vphi$ where\/ ${\|\cdot\|_\vphi}$ is the new norm induced by the new
inner product\/ ${\<\cdot\,;\cdot\>_\vphi}$ on\/ $\X$ as in Theorem 3.2$.$
Take the positive operator\/ ${A\in\BX}$ for which\/
${\<\cdot\,;\cdot\>_\vphi}={\<A\,\cdot\,;\cdot\>}$ as in Claim 2\/
$($proof of Corollary 4.1\/$).$ The following assertions are pairwise
equivalent.
\vskip4pt\noi
\begin{description}
\item{$\kern-4pt$\rm(a)$\kern1pt$}
$A$ is invertible \quad$($i.e., has bounded inverse on\/ $\X)$.
\vskip4pt\noi
\item{$\kern-5pt$\rm(b)$\kern0pt$}
The norms\/ ${\|\cdot\|}$ and\/ ${\|\cdot\|_\vphi}$ on\/ $\X$ are equivalent.
\vskip4pt\noi
\item{$\kern-4pt$\rm(c)$\kern2pt$}
$T$ acting on\/ $\X$ is similar to an isometry.
\vskip4pt\noi
\item{$\kern-4pt$\rm(d)$\kern0pt$}
$T$ acting on\/ $\X$ is power bounded below \quad$($i.e., there is a
constant\/ ${\gamma>0}$ such that\/ ${\gamma\|x\|\le\|T^nx\|}$ for all
integers ${n\ge1}$ and every\/ ${x\in\X})$.
\end{description}
\vskip4pt\noi
If\/ $T$ is weakly l-sequentially supercyclic, then any of the
above equivalent assertions implies\/ $T$ is unitary when acting on\/
$\X_\vphi$, which in turn implies
\vskip4pt\noi
\begin{description}
\item{$\kern-4pt$\rm(e)$\kern2pt$}
$T$ acting on\/ $\X$ is similar to a unitary operator.
\end{description}
\end{corollary}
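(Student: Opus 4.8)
The plan is to prove the four equivalences $(a)$–$(d)$ by the cycle $(a)\Rightarrow(c)\Rightarrow(d)\Rightarrow(b)\Rightarrow(a)$ --- none of which uses supercyclicity --- and only then invoke the weak l-sequential supercyclicity to upgrade ``isometry'' to ``unitary'' and obtain $(e)$. First I would carry over from Claim 2 the two structural facts $\|x\|_\vphi^2=\langle Ax;x\rangle$ and $T^*\!AT=A$, together with $A=A^*>O$ and the bound $\|x\|_\vphi\le\beta\|x\|$ (with $\beta=\sup_n\|T^n\|$) already established in Theorem 3.2. The equivalence $(a)\Leftrightarrow(b)$ is then pure spectral theory: a bounded positive self-adjoint operator $A$ has a bounded inverse if and only if $A\ge cI$ for some $c>0$ (i.e. $\sigma(A)$ is bounded away from $0$), and the inequality $A\ge cI$ reads exactly $c\|x\|^2\le\|x\|_\vphi^2$, which paired with the known upper bound $\|x\|_\vphi\le\beta\|x\|$ is precisely the equivalence of the two norms.

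For $(a)\Rightarrow(c)$ I would use that, $A$ being positive and invertible, $A^{1/2}$ is invertible and $V:=A^{1/2}TA^{-1/2}$ satisfies $V^*V=A^{-1/2}T^*\!A^{1/2}A^{1/2}TA^{-1/2}=A^{-1/2}(T^*\!AT)A^{-1/2}=A^{-1/2}AA^{-1/2}=I$, so $V$ is an isometry on $(\X,\|\cdot\|)$ and $T=A^{-1/2}VA^{1/2}$ exhibits $T$ as similar to $V$. For $(c)\Rightarrow(d)$, writing $T=S^{-1}WS$ with $W$ an isometry and $S$ boundedly invertible gives $\|T^nx\|=\|S^{-1}W^nSx\|\ge\|S\|^{-1}\|Sx\|\ge(\|S\|\,\|S^{-1}\|)^{-1}\|x\|$ for every $n$, so $T$ is power bounded below. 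For $(d)\Rightarrow(b)$ I would feed the inequality $\gamma^2\|x\|^2\le\|T^nx\|^2$ into the Banach limit $\vphi$: by shift-invariance (absorbing the $n=0$ term) and order-preservation, $\gamma^2\|x\|^2\le\vphi(\{\|T^nx\|^2\})=\|x\|_\vphi^2$, which with the upper bound again yields norm equivalence. This closes the cycle, so $(a)$–$(d)$ are pairwise equivalent.

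For the last assertion, assume $T$ is weakly l-sequentially supercyclic and that the (equivalent) conditions hold. By Theorem 3.2(a), $T$ is an isometry on $\X_\vphi$, hence injective; by $(d)$ with $n=1$ it is bounded below on $(\X,\|\cdot\|)$, so $\R(T)$ is closed. By Theorem 3.2(b) the supercyclic vector persists on $\X_\vphi$, and Remark 2.1 makes $\R(T)$ dense. Closed and dense force $\R(T)=\X$, so $T$ is a surjective isometry on $\X_\vphi$ --- a unitary operator there (and under $(b)$ the space $\X_\vphi$ is in any case complete). Finally, this surjectivity of $T$ makes the isometry $V=A^{1/2}TA^{-1/2}$ from $(a)\Rightarrow(c)$ surjective as well, hence unitary on $(\X,\|\cdot\|)$, and $T=A^{-1/2}VA^{1/2}$ displays $T$ as similar (via the bounded invertible $A^{1/2}$) to a unitary operator, giving $(e)$.

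I expect the genuinely delicate points to be two. The minor one is the $(a)\Leftrightarrow(b)$ spectral step, where self-adjointness and positivity of $A$ are exactly what promote the lower bound $A\ge cI$ to an honest bounded inverse (mere injective positivity would not suffice). The conceptual crux, however, is bookkeeping the role of the hypotheses: the equivalences $(a)$–$(d)$ characterize ``similar to an isometry'' without any cyclicity, while the jump to ``unitary/$(e)$'' rests on supercyclicity alone supplying density of $\R(T)$, which together with the closed range coming from $(d)$ forces surjectivity. Keeping these two inputs --- closed range from the equivalent conditions, dense range from supercyclicity --- cleanly separated is where care is needed.
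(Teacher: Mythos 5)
Your proof is correct, and while it uses the same basic ingredients as the paper (the facts $\|x\|_\vphi^2=\<Ax\,;x\>$ and $T^*\!A\kern1ptT=A$ from Claim 2, the similarity via $A^{1/2}$, the Banach limit, and the closed-plus-dense range argument), its logical organization is genuinely different. The paper proves (a)$\iff$(b) by a range argument for the positive operator $A$ (injective self-adjoint gives dense range, bounded below gives closed range, hence surjectivity --- your spectral step $A\ge cI$ is the same fact in different clothing), proves (a)$\Rightarrow$(c)$\Rightarrow$(b), where (c)$\Rightarrow$(b) is a Banach-limit estimate routed through the similarity $S$, and then simply \emph{cites} Koehler--Rosenthal \cite[Theorem 2]{KR} for (c)$\iff$(d). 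Your cycle (a)$\Rightarrow$(c)$\Rightarrow$(d)$\Rightarrow$(b)$\Rightarrow$(a) dispenses with that citation: your two-line estimate for (c)$\Rightarrow$(d), together with (d)$\Rightarrow$(b) obtained by feeding $\gamma^2\|x\|^2\le\|T^nx\|^2$ through the order-preserving, shift-invariant Banach limit, in effect reproves the relevant (Hilbert-space) direction of \cite[Theorem 2]{KR} by the classical Nagy-style argument, so the four equivalences become self-contained; your (d)$\Rightarrow$(b) link is arguably cleaner than the paper's (c)$\Rightarrow$(b) detour through the auxiliary similarity $S$. For the final assertion the paper argues that (b) forces $\X_\vphi$ to be complete, so $\X_\vphi=\wtil\X=\what\X$ and $T$ on $\X_\vphi$ coincides with its extension $\what T$, which is unitary by Theorem 3.2(c); you bypass the completion entirely, combining closed range in $(\X,\|\cdot\|)$ (from (d) with $n=1$ and completeness of $\X$) with dense range (Remark 2.1 via Theorem 3.2(b), the two norms being equivalent under (b)) to get surjectivity of the isometry $T$ on $\X_\vphi$ directly --- which is in fact the same closed-plus-dense argument the paper runs inside the proof of Theorem 3.1(c) for $\what T$, specialized to the case where no completion is needed. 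Your transport of surjectivity to $V=A^{1/2}TA^{-1/2}$, making $V$ a surjective isometry and hence unitary, matches the paper's final step for (e), just made explicit by the computation $V^*V=A^{-1/2}(T^*\!A\kern1ptT)A^{-1/2}=I$. The trade-off: the paper's version is shorter and makes its reliance on prior results (\cite{KR}, Theorem 3.2(c)) transparent, while yours is more elementary and entirely self-contained.
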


\begin{proof}
(a)$\kern-1pt\iff$\kern-1pt(b)$:$
Since $A$ is self-adjoint and injective it has a dense range$.$ If in
addition $A$ is bounded below, then it has a closed range and so it is
surjective$.$ Thus the positive operator $A$ is invertible (i.e., it has a
bounded inverse on $\X$, equiva\-lently, it is bounded below and surjective)
if and only if (cf$.$ proof of Claim 2)
$\gamma\|x\|
\le\|A^{\frac{1}{2}}x\|\!
=\|x\|_\vphi
\le\beta\|x\|$
for every $x$ in $\X$ for some ${\gamma>0}$ (and in this case
$\gamma=\|A^{-\frac{1}{2}}\|^{-1})$, which means the norms ${\|\cdot\|}$ and
${\|\cdot\|_\vphi}$ are equivalent.

\vskip6pt\noi
(a)$\,\limply$(c)$\,\limply$(b)$:$
In fact,
$\|A^\frac{1}{2}Tx\|^2
=\<{A^\frac{1}{2}Tx\,;A^\frac{1}{2}Tx}\>
=\<{T^*\!A\kern1ptTx\,;x}\>
=\<{Ax\,;x}\>
=\|A^\frac{1}{2}x\|^2$
for every ${x\in\X}$ since ${T^*\!A\kern1ptT=A}.$ Thus if (a) holds,
then $\|A^\frac{1}{2}TA^{-\frac{1}{2}}x\|=\|x\|$ for every ${x\in\X}$, and so
$T$ is similar to an isometry (i.e., (c) holds)$.$ Conversely if (c) holds,
then there is an operator ${S\in\BX}$ for which $S^{-1}T^nS$ is an isometry
for every ${n\ge0}$ so that the sequence $\{\|S^{-1}T^nSx\|^2\}$ is
constantly equal to $\|x\|^2\!$, and hence
$\|x\|_\vphi^2
\le$
$\beta^2\|x\|^2
=\beta^2\vphi(\{\|S^{-1}T^nSx\|^2\})
\le\beta^2\|S^{-1}\|^2\vphi(\{\|T^nSx\|^2\})
=\beta^2\|S^{-1}\|^2\|Sx\|_\vphi^2
\le\beta^2\|S^{-1}\|^2\|S\|^2\|x\|_\vphi^2$
for every ${x\in\X}.$ Thus (b) holds.

\vskip6pt\noi
(c)$\kern-1pt\iff\kern-1pt$(d)$:$
This is a direct consequence of \cite[Theorem 2]{KR} which reads as follows$.$
{\it An operator\/ $F$ on a normed space\/ $\Z$ is power bounded and power
bounded below\/ $($i.e., there are constants\/ ${\gamma,\beta>0}$ such that\/
$\gamma\|z\|\le\|F^nz\|\le\beta\|z\|$ for all\/ integers\/ ${n\ge1}$ and
every\/ ${z\in\Z})$ if and only if\/ $F$ is similar to an isometry}\/.

\vskip6pt\noi
Moreover if (b) holds, then $\X_\vphi$ is a Hilbert space (because $\X$ is a
Hilbert space)$.$ Then ${\X_\vphi=\wtil\X=\what\X}$ and $T$ on $\X_\vphi$
coincides with $\what T$ on $\what\X$ and, since $T$ on $\X_\vphi$ is weak\-ly
l-sequentially supercyclic whenever $T$ on $\X$ is, $T$ on $\X_\vphi$ is
unitary according to Theorem 3.2$.$ Thus $T$ on $\X_\vphi$ is surjective,
and so is $T$ on $\X.$ Thus since $T$ is surjec\-tive and similar to an
isometry by (c), then the isometry is surjective which means it is unitary$.$
This shows that any of the above equivalent assertions imply (e).
\end{proof}

\vskip0pt\noi
\begin{corollary}
Every weakly l-sequentially supercyclic operator similar to an isometry is
similar to a unitary operator.
\end{corollary}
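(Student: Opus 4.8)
The plan is to recognize this statement as nothing more than the implication (c)$\,\limply\,$(e) already packaged inside Corollary 4.2, the only apparent gap being that Corollary 4.2 carries the explicit standing hypotheses that $T$ be power bounded and of class $C_{1{\textstyle\cdot}}$, whereas here we are told only that $T$ is similar to an isometry. So the first task is to check that these two properties come for free from the similarity assumption, after which Corollary 4.2 supplies the conclusion with no further work.

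I would write the hypothesis as $T=SWS^{-1}$ with $W\in\BX$ an isometry and $S\in\BX$ invertible (bounded with bounded inverse), so that $T^n=SW^nS^{-1}$ for every $n$. For power boundedness I would simply estimate $\|T^n\|\le\|S\|\,\|W^n\|\,\|S^{-1}\|=\|S\|\,\|S^{-1}\|$, using that each $W^n$ is again an isometry and hence has norm one; thus $\sup_n\|T^n\|<\infty$. For class $C_{1{\textstyle\cdot}}$ I would run the companion lower estimate: since $\|Sv\|\ge\|S^{-1}\|^{-1}\|v\|$ for every $v$ and $\|W^nu\|=\|u\|$ for every $u$, the choice $v=W^nS^{-1}x$ gives
$$
\|T^nx\|=\|SW^nS^{-1}x\|\ge\|S^{-1}\|^{-1}\|S^{-1}x\|
$$
for every $x$. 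For $x\ne0$ the invertibility of $S$ forces $S^{-1}x\ne0$, so the right-hand side is a strictly positive constant independent of $n$; hence $T^nx\not\to0$ for every nonzero $x$, which is exactly the defining property of class $C_{1{\textstyle\cdot}}$.

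With power boundedness and class $C_{1{\textstyle\cdot}}$ now in hand, Corollary 4.2 applies to $T$ verbatim. Its assertion (c) --- that $T$ is similar to an isometry --- is precisely our hypothesis, and $T$ is weakly l-sequentially supercyclic by assumption, so the final clause of Corollary 4.2 delivers assertion (e): $T$ is similar to a unitary operator. Here the ambient space is understood to be a Hilbert space, which is the only setting in which the word ``unitary'' is meaningful and in which Corollary 4.2 is stated.

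I do not expect any genuine obstacle in this argument; its entire content lies in the observation of the second paragraph, namely that ``similar to an isometry'' already encodes both standing hypotheses of Corollary 4.2. The one point to watch is the bookkeeping of the similarity estimates --- invoking $\|W^n\|=1$ for the upper bound on $\|T^n\|$ and the bound-below property $\|S^{-1}\|^{-1}\|v\|\le\|Sv\|$ for the lower bound on $\|T^nx\|$ --- so that the two-sided control $\|S^{-1}\|^{-1}\|S^{-1}x\|\le\|T^nx\|\le\|S\|\,\|S^{-1}\|\,\|x\|$ emerges cleanly and yields exactly the two properties Corollary 4.2 requires.
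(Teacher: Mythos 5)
Your proof is correct and takes exactly the paper's route: the paper also deduces the statement from Corollary 4.2(c)$\,\limply\,$(e), remarking that similarity to an isometry trivially implies power boundedness of class $C_{1{\textstyle\cdot}}$ (in the Hilbert-space setting of that corollary). Your two estimates $\|T^nx\|\le\|S\|\,\|S^{-1}\|\,\|x\|$ and $\|T^nx\|\ge\|S^{-1}\|^{-2}\|x\|$ simply spell out that triviality --- indeed they show the stronger property of being power bounded below --- so nothing is missing.
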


\begin{proof}
This holds for Hilbert-space operators according to Corollary 4.2(c,e)
since similarity to an isometry trivially implies power boundedness of
class $C_{1{\textstyle\cdot}\!}$.
\end{proof}

\vskip0pt\noi
\begin{remark}
As we saw in Corollary 4.2(c,d) (cf$.$ \cite[Theorem 2]{KR}),
\vskip4pt\noi
{\narrower
\it if an operator on a Hilbert space is power bounded and power bounded
below, then it is similar to an isometry\/.
\vskip4pt}\noi
Moreover, according to Corollary 4.2(c,e),
\vskip4pt\noi
{\narrower
\it if a weakly l-sequentially supercyclic operator on a Hilbert space
is power bounded and power bounded below, then it is similar to a unitary
operator\/.
\vskip4pt}\noi
Indeed, if $T$ is power bounded below, then it is of class
$C_{1{\textstyle\cdot}\!}$ (i.e., $\gamma\|x\|\le\|T^nx\|$ implies
${T^nx\not\to0})$ and the converse fails (see, e.g., \cite[p.69]{MDOT})$.$
Does the converse holds under weak l-sequential supercyclicity
assumption$\kern1pt?$ This suggests the following stronger form of question
(F) (see Section 1) in the sense that an affirmative answer to question
(F$'$) implies an affirmative answer to question (F):
\vskip4pt\noi
\begin{description}
\item{$\kern-9pt$\rm(F$'$)$\kern2pt$}
{\it is a weakly l-sequentially supercyclic power bounded operator of class\/
$C_{1{\textstyle\cdot}\!}$ power bounded below}$\,?$
\end{description}
\end{remark}

\vskip-16pt\noi
\bibliographystyle{amsplain}

\end{document}